\documentclass[12pt]{article}

\usepackage{amsmath,amssymb,amsthm}
\usepackage{bbm}
\usepackage{enumerate}

\newtheorem{theorem}{Theorem}
\newtheorem{lemma}[theorem]{Lemma}
\newtheorem{proposition}[theorem]{Proposition}
\newtheorem{corollary}[theorem]{Corollary}

\title{Completely greedy coin sets}
\author{Andrew J. Young}
\date{}

\begin{document}

\maketitle

\begin{abstract}
We show that the binary coin set minimizes the number of coins needed to guarantee the ability to make change in any one transaction and its asymptotic uniform average cost is no worse than that of any completely greedy coin set.
\end{abstract}

\section{Introduction}

For our purposes, a \emph{coin set} is a set of positive integers containing $1$, and a \emph{sub-coin set} is a subset of a coin set that is also a coin set, i.e. it contains $1$.
When enumerated, a coin set is always written in increasing order and its $k$th element is a reference to this order.
Let $\mathcal{C}$ be a coin set.
An element $c \in \mathcal{C}$ is a \emph{coin}.
One is said to \emph{make change} at a positive integer $n$ by choosing a \emph{representation} in $\mathcal{C}$, collection of coins $c \in \mathcal{C}$ that sum to $n$, i.e. $n = \sum_{c \in \mathcal{C}} \alpha_{c} c$ for some $\alpha_{c} \in \mathbb{N}$.
A representation is \emph{minimal} if it uses the least possible number of coins, $\sum_{c \in \mathcal{C}} \alpha_{c}$ is minimized.
We say a coin set is \emph{greedy} if successively choosing the greatest lower bounding coin provides a minimal representation for every positive integer $n$.
A coin set $\mathcal{C}$ is said to be \emph{totally greedy} \cite{Cowen2008} if the first $k$ elements of $\mathcal{C}$ are greedy for all $1 \le k \le \lvert \mathcal{C} \rvert$.
A \emph{transaction} is a positive integer of interest.

One problem is that of \emph{optimal denomination}, the coin set minimizing the average coins per transaction.
Under the greedy and totally greedy settings the following pathological issue arises.
Suppose one is interested in making change for transactions in $\{1,2, \ldots, N \}$, for some $N$, then without any further restrictions the optimal denomination is $\{1,2, \ldots, N \}$.
To combat this pathology we introduce completely greedy coin sets.

A coin set is said to be \emph{completely greedy} if all of its sub-coin sets are greedy.
With this restriction the maximum number of coins in a coin set is at most logarithmic in $N$, see Lemma \ref{lemma_bin_lb} and its succeeding remarks.
Completely greedy coin sets are characterized by specific subsets of cardinality $3$ (Proposition \ref{prop_3pt_gc}), and greediness of subsets of cardinality $3$ can be quickly verified (Corollary \ref{cor_3pt}).
Moreover, the binary coin set, i.e. $\{1,2, \ldots, 2^{k}, \ldots \}$, is completely greedy, completely unique (Proposition \ref{prop_div_unique}), minimizes coin purse size (Proposition \ref{prop_coin_purse}), and is asymptotically optimal in some sense (Theorem \ref{theorem_icg_lb}).

While complete greediness is a rather strong condition, we believe it is justified by its resilience to the loss of elements of a coin set different from $1$.
More specifically, suppose a store makes change using some coin set $\mathcal{C}$.
For practical purposes, as making change is in general $NP$-hard \cite{Shallit2003}, a greedy coin set is desirable.
However, without complete greediness, this property may be lost if the store were to run out of some non-unit denominations.

\section{Minimal coin purse}

We begin by studying the minimum number of coins needed to guarantee the ability to make change in any one transaction.
To that end, given a coin set $\mathcal{C}$, an \emph{optimal coin purse for $\mathcal{C}$} is the smallest list of elements of $\mathcal{C}$ such that any one transaction can be represented using elements of that list.
Further minimizing over all possible coin sets provides a \emph{minimal coin purse}.
Let $f_{N}(\mathcal{C})$ denote the size of an optimal coin purse for a coin set $\mathcal{C}$ used in making change for transactions in $\{1,2, \ldots, N \}$, and
\begin{equation*}
f_{N}^{\ast} := \min_{\mathcal{C}} f_{N}(\mathcal{C})
\end{equation*}
the size of a corresponding minimal coin purse.

\begin{proposition}
\label{prop_coin_purse}
$f_{N}^{\ast}= \lceil \log_{2}(N+1) \rceil$.
\end{proposition}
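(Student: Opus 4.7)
The plan is to prove matching upper and lower bounds. For the upper bound, I will exhibit a concrete coin set and purse: take the binary coin set $\mathcal{C} = \{1,2,4,\ldots\}$, set $k = \lceil \log_{2}(N+1) \rceil$, and choose the purse to consist of one copy each of $1,2,4,\ldots,2^{k-1}$. Since $1+2+\cdots+2^{k-1} = 2^{k}-1 \ge N$, standard binary expansion shows every $n \in \{1,\ldots,N\}$ is a sum of a subset of these $k$ coins, so $f_{N}^{\ast} \le f_{N}(\mathcal{C}) \le k$.

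For the lower bound, I will argue by counting attainable sums. Fix any coin set $\mathcal{C}$ and any purse $P$ (a multiset of elements of $\mathcal{C}$) of size $m$ that suffices for transactions $\{1,\ldots,N\}$. Group the entries of $P$ by distinct value, so $P$ has distinct values $v_{1},\ldots,v_{r}$ with multiplicities $a_{1},\ldots,a_{r}$ satisfying $\sum_{i} a_{i} = m$. Each submultiset of $P$ is determined by a tuple $(b_{1},\ldots,b_{r})$ with $0 \le b_{i} \le a_{i}$, so the number of submultisets, and hence the number of attainable sums (including the empty sum $0$), is at most $\prod_{i}(a_{i}+1)$.

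The key inequality is $a+1 \le 2^{a}$ for every integer $a \ge 0$, which yields
\begin{equation*}
\prod_{i=1}^{r}(a_{i}+1) \;\le\; \prod_{i=1}^{r} 2^{a_{i}} \;=\; 2^{m}.
\end{equation*}
Since $P$ must represent each of the $N$ transactions $1,\ldots,N$, and the empty submultiset contributes the distinct value $0$, the purse realizes at least $N+1$ distinct sums. Therefore $2^{m} \ge N+1$, which forces $m \ge \lceil \log_{2}(N+1) \rceil$. Combining with the upper bound gives $f_{N}^{\ast} = \lceil \log_{2}(N+1) \rceil$.

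I do not foresee any serious obstacle: the upper bound is immediate from binary expansion, and the lower bound is a short counting argument. The only subtle point is making sure the purse is treated as a multiset (so a coin may appear several times) and that submultiset sums rather than subset sums are what must be counted; the inequality $a+1 \le 2^{a}$ then cleanly reduces the multiset case to the familiar subset bound $2^{m}$.
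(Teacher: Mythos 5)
Your proof is correct and follows essentially the same route as the paper: an explicit binary purse of size $\lceil \log_{2}(N+1) \rceil$ for the upper bound, and a subset-sum counting argument (at most $2^{m}$ attainable sums including the empty one, which must cover $0,1,\ldots,N$) for the lower bound. Your multiset refinement $\prod_{i}(a_{i}+1) \le 2^{m}$ is a slightly sharper count than the paper's list-based bound $\lvert \mathcal{S}(\ell) \rvert \le 2^{n}-1$, but it yields the same conclusion by the same idea.
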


\begin{proof}
Let $\mathcal{C}$ be a coin set.
For a list $\ell = (\ell_{k})_{1 \le k \le n}$ with $\ell_{k} \in \mathcal{C}$, let
\begin{equation*}
\mathcal{S}(\ell) = \left \{ \sum_{k=1}^{n} \beta_{k} \ell_{k} : \beta_{k} \in \{0,1 \}, \, \exists \beta_{k} \ne 0\right \}.
\end{equation*}
Then $\lvert \mathcal{S}(\ell) \rvert \le 2^{n} -1$.
Moreover, $f_{N}^{\ast}$ is the length of a minimal list $\ell$ such that $\mathcal{S}(\ell) \supset \{1, \ldots, N \}$.
Thus $f_{N}^{\ast} \ge \lceil \log_{2} (N+1) \rceil$ and the binary coin set $\mathcal{C}_{0} = \{1,2, \ldots, 2^{ \lceil \log_{2} (N+1) \rceil -1} \}$ achieves this bound.
\end{proof}

For $N=99$ this is $f_{99}^{\ast} = 7$ achieved by $\{1,2,4,8,16,32,64\}$ using $3.19$ coins on average.
The optimal, unrestricted, coin set of cardinality $7$ uses $2.68$ coins on average \cite{Shallit2003}.

\section{Complete greediness}

Let $\mathcal{C}$ be a coin set.
In addition to greediness we shall consider uniqueness.
$\mathcal{C}$ is said to be \emph{unique} if every positive integer has only one minimal representation, e.g., $\{1,2,3\}$ is not unique $4 = 2 +2 = 3 +1$, for $\lvert \mathcal{C} \rvert \le 3$ see \cite{Bryant2012}.
Total and complete uniqueness of coin sets are defined in the same manor as for greediness.
$\text{opt}(\cdot \, ; \, \mathcal{C} )$ and $\text{grd}(\cdot \, ; \, \mathcal{C})$ denote the \emph{optimal and greedy cost functions}, i.e. number of elements in an optimal and the greedy representations under $\mathcal{C}$.

A few remarks about greediness of coin sets.
Greediness does not imply total greediness \cite{Cowen2008}, $\{1,2,4,5,8\}$ is greedy but $\{1,2,4,5\}$ is not, does not imply complete greediness, $\{1,3,5,7\}$ is totally greedy but $\{1,5,7\}$ is not greedy.
All cardinality $2$ coin sets are greedy and if $\mathcal{C} = \{c_{1},\ldots, c_{n}\}$ is not greedy the least violator is in $(c_{3}+1,c_{n-1}+c_{n})$ \cite{Kozen1994}.

\begin{theorem}{\rm \cite{Magazine1975,Cowen2008}}
\label{thm_opt}
Let $\mathcal{C} = \{c_{1},\ldots,c_{n}\}$ be a greedy coin set, $c_{n+1} > c_{n}$ and $m = \lceil \frac{c_{n+1}}{c_{n}} \rceil$.
Then $\mathcal{C}' = \mathcal{C} \cup \{c_{n+1}\}$ is greedy if and only if $\mathrm{opt}(mc_{n} \, ; \, \mathcal{C}') = \mathrm{grd}(mc_{n} \, ; \, \mathcal{C}')$ if and only if $\mathrm{grd}(mc_{n} \, ; \, \mathcal{C}' ) \le m$.
\end{theorem}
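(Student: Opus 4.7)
The plan is to prove the three conditions equivalent by establishing the cyclic chain (a) $\Rightarrow$ (b) $\Rightarrow$ (c) $\Rightarrow$ (a), where (a), (b), (c) denote greediness of $\mathcal{C}'$, the equality $\text{opt}(mc_n;\mathcal{C}') = \text{grd}(mc_n;\mathcal{C}')$, and the bound $\text{grd}(mc_n;\mathcal{C}') \le m$, respectively. The first implication is the definition of greediness applied at $mc_n$, and the second follows because the $m$-coin representation of $mc_n$ by copies of $c_n$ witnesses $\text{opt}(mc_n;\mathcal{C}') \le m$.

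The main obstacle is (c) $\Rightarrow$ (a). I would argue by contradiction: assume $\text{grd}(mc_n;\mathcal{C}') \le m$ and let $N^*$ be the smallest value where greedy and optimal on $\mathcal{C}'$ disagree. For $N < c_{n+1}$, $c_{n+1}$ cannot appear in any representation, so greediness of $\mathcal{C}$ transfers to $\mathcal{C}'$; hence $N^* \ge c_{n+1}$, and a direct check excludes $N^* = c_{n+1}$. The Kozen--Zaks bound recalled in the preceding remarks then yields $N^* < c_n + c_{n+1}$. In particular $N^* < 2c_{n+1}$, so the greedy representation in $\mathcal{C}'$ picks $c_{n+1}$ exactly once, leaving $N^* - c_{n+1} \in (0, c_n]$ for greedy on $\mathcal{C}$; thus $\text{grd}(N^*;\mathcal{C}') = 1 + \text{grd}(N^* - c_{n+1};\mathcal{C})$. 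Moreover, any optimum using $c_{n+1}$ would leave the same sub-$c_n$ residue and so match the greedy count, contradicting that $N^*$ is a violator; hence the optimum avoids $c_{n+1}$ entirely and equals $\text{grd}(N^*;\mathcal{C})$ by greediness of $\mathcal{C}$.

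Write $f := \text{grd}(\,\cdot\,;\,\mathcal{C})$ and set $s := mc_n - c_{n+1} \in [0, c_n)$, so (c) reads $f(s) \le m - 1$. My goal is to derive the contradiction $f(s) \ge m$. The violation at $N^*$ now becomes $f(N^*) < 1 + f(N^* - c_{n+1})$, and the bounds $c_{n+1} < N^* < c_n + c_{n+1}$ constrain $k := \lfloor N^*/c_n \rfloor$ to $\{m-1, m\}$. Since greedy on $\mathcal{C}$ at $N^*$ picks $k$ copies of $c_n$ first, $f(N^*) = k + f(N^* - kc_n)$. Coupled with the subadditivity $f(a+b) \le f(a) + f(b)$ (coin representations concatenate), each case yields $f(s) \ge m$. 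For $k = m$, putting $r' := N^* - mc_n \ge 0$ gives $N^* - c_{n+1} = r' + s$, the violation becomes $f(r' + s) \ge m + f(r')$, and subadditivity directly produces $f(s) \ge m$. The $k = m-1$ case (which requires $s > 0$) is the delicate one; writing $u := c_n - s$ so that $s + u = c_n$, and using the identity $f(c_n + r) = 1 + f(r)$ for $r < c_n$ (greedy picks $c_n$), one reduces the violation to the same subadditivity argument, once more concluding $f(s) \ge m$ and contradicting (c).
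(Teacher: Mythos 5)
Your argument is correct, but note that the paper does not prove Theorem \ref{thm_opt} at all: it is quoted from \cite{Magazine1975,Cowen2008}, so there is no internal proof to compare against. What you supply is an essentially self-contained derivation, modulo one borrowed ingredient: the Kozen--Zaks localization of the least violator of $\mathcal{C}'$ to the interval below $c_{n}+c_{n+1}$, which the paper likewise only recalls without proof. Given that, your cycle is sound: (a) $\Rightarrow$ (b) is the definition, (b) $\Rightarrow$ (c) uses the $m$-fold $c_{n}$ representation, and for (c) $\Rightarrow$ (a) the least violator $N^{\ast}$ indeed satisfies $c_{n+1} < N^{\ast} < c_{n}+c_{n+1}$, its optimum must avoid $c_{n+1}$ (else minimality of $N^{\ast}$ transfers the greedy count), and the two cases $k \in \{m-1,m\}$ each force $f(s) \ge m$ by subadditivity of $f = \mathrm{opt}(\cdot \, ; \, \mathcal{C})$. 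I verified the terser $k=m-1$ case: with $r'' := N^{\ast} - (m-1)c_{n}$ and $u := c_{n}-s$, the identity $f(c_{n}+(r''-u)) = 1 + f(r''-u)$ together with $f(r''+s) \le f(r'')+f(s)$ gives $f(r''-u) \le f(r'')+f(s)-1$, which against the violation $(m-1)+f(r'') \le f(r''-u)$ yields $f(s) \ge m$, contradicting $f(s) \le m-1$. Two cosmetic points: $N^{\ast}-c_{n+1}$ lies in $(0,c_{n})$, not $(0,c_{n}]$; and when $s=0$ only $k=m$ occurs and the subadditivity step degenerates to the still-contradictory $m \le 0$, which is worth stating explicitly rather than leaving as $f(0) \ge m$.
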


\begin{corollary}{\rm \cite{Kozen1994,Cowen2008}}
\label{cor_3pt}
A coin set $\mathcal{C} = \{1,a,b\}$ is greedy if and only if $b \ge \lceil \frac{b}{a} \rceil (a-1) + 1$.
\end{corollary}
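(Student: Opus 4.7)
The plan is to deduce this directly from Theorem \ref{thm_opt}. Take the base coin set $\mathcal{C}_0 = \{1,a\}$, which is greedy because every two-element coin set is greedy. Adjoining $c_3 = b$ with $b > a$ and setting $m = \lceil b/a \rceil$, Theorem \ref{thm_opt} tells us that $\mathcal{C} = \{1,a,b\}$ is greedy if and only if $\mathrm{grd}(ma\,;\,\mathcal{C}) \le m$. So the problem reduces entirely to evaluating the greedy cost of $ma$ in $\mathcal{C}$.

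For this I would first note that $m \ge 2$, since $b > a$ forces $b/a > 1$. From $m = \lceil b/a \rceil$ we have $(m-1)a < b \le ma$, hence
\begin{equation*}
0 \le ma - b \le a - 1.
\end{equation*}
In particular $ma - b < a < b$, so after the greedy algorithm picks one copy of $b$, the residual $ma - b$ is strictly less than $a$ and must be paid entirely in $1$'s. Consequently $\mathrm{grd}(ma\,;\,\mathcal{C}) = 1 + (ma - b)$.

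The rest is a one-line algebraic rearrangement: $\mathrm{grd}(ma\,;\,\mathcal{C}) \le m$ is equivalent to $1 + ma - b \le m$, i.e.\ $b \ge m(a-1) + 1 = \lceil b/a \rceil(a-1) + 1$, which is exactly the stated criterion.

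There is no real obstacle here; the only point needing care is justifying that the greedy algorithm at $ma$ takes exactly one copy of $b$ and then only unit coins, which is where the bound $ma - b \le a - 1$ (and $m \ge 2$, so that $ma < 2b$) is used.
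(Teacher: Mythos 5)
Your proof is correct. The paper gives no proof of this corollary (it is cited to the references), and your derivation—specializing Theorem \ref{thm_opt} to the greedy base set $\{1,a\}$, observing that $(m-1)a < b \le ma$ forces the greedy algorithm at $ma$ to take exactly one copy of $b$ followed by $ma-b$ units so that $\mathrm{grd}(ma \, ; \, \mathcal{C}) = 1 + ma - b$, and rearranging $1 + ma - b \le m$ into $b \ge \lceil \frac{b}{a} \rceil (a-1)+1$—is exactly the standard route and is carried out without gaps.
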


\begin{proposition}
\label{prop_3pt_gc}
Let $\mathcal{C} = \{c_{1}, \ldots, c_{n} \}$ be a coin set.
The following are equivalent.
\begin{enumerate}[(i)]
\item
$\mathcal{C}$ is completely greedy;
\item
All sub-coin sets of cardinality $3$ are greedy;
\item
For all $k \ge 3$, $\{c_{1},c_{k-1},c_{k} \}$ is greedy.
\end{enumerate}
\end{proposition}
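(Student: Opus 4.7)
The plan is to close the cycle of implications. The directions $(i) \Rightarrow (ii)$ and $(ii) \Rightarrow (iii)$ are immediate from the definitions, so the real content is $(iii) \Rightarrow (ii)$ and $(ii) \Rightarrow (i)$.

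For $(iii) \Rightarrow (ii)$, my plan is to reformulate Corollary \ref{cor_3pt} in the multiplicative form ``$\{1, a, b\}$ is greedy iff $b - 1 \ge \lceil b/a \rceil (a - 1)$'' and then, with $i$ fixed, induct on $j > i$ to show that $\{c_1, c_i, c_j\}$ is greedy. The base $j = i + 1$ is exactly $(iii)$. For the step, I would combine the inductive hypothesis for $\{c_1, c_i, c_{j-1}\}$ with $(iii)$ for $\{c_1, c_{j-1}, c_j\}$, chaining the inequalities to get $c_j - 1 \ge pq(c_i - 1)$, where $p = \lceil c_{j-1}/c_i \rceil$ and $q = \lceil c_j/c_{j-1} \rceil$. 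Since $c_j \le qc_{j-1} \le pqc_i$ forces $\lceil c_j/c_i \rceil \le pq$, the desired greedy condition for $\{c_1, c_i, c_j\}$ drops out.

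For $(ii) \Rightarrow (i)$, I would induct on the size $m$ of an arbitrary sub-coin set $\mathcal{C}' = \{c_{i_1}, \ldots, c_{i_m}\}$. Assuming the predecessor $\{c_{i_1}, \ldots, c_{i_{m-1}}\}$ is greedy, Theorem \ref{thm_opt} reduces the task to verifying $\mathrm{grd}(Mc_{i_{m-1}}; \mathcal{C}') \le M$ with $M = \lceil c_{i_m}/c_{i_{m-1}} \rceil$. The greedy algorithm on $Mc_{i_{m-1}}$ in $\mathcal{C}'$ takes $c_{i_m}$ once and is then left with a remainder strictly less than $c_{i_{m-1}}$, so its total count is at most $1 + (Mc_{i_{m-1}} - c_{i_m})$. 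Hypothesis $(ii)$ applied to the cardinality-$3$ sub-coin set $\{1, c_{i_{m-1}}, c_{i_m}\}$, via Corollary \ref{cor_3pt}, gives precisely $Mc_{i_{m-1}} - c_{i_m} \le M - 1$, completing the induction.

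I expect the main obstacle to be spotting the right form in which to iterate the greedy criterion in $(iii) \Rightarrow (ii)$. The raw statement of Corollary \ref{cor_3pt} does not telescope, but rewriting it so that both sides are homogeneous in $(\cdot - 1)$ makes the chaining transparent, and the simple ceiling inequality $\lceil c_j/c_i \rceil \le \lceil c_j/c_{j-1} \rceil \lceil c_{j-1}/c_i \rceil$ takes care of the matching upper bound on the left-hand side.
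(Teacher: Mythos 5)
Your proposal is correct and follows essentially the same route as the paper: the $(iii)\Rightarrow(ii)$ step is the paper's telescoping of Corollary \ref{cor_3pt} via $\lceil c_j/c_i\rceil \le \lceil c_j/c_{j-1}\rceil\lceil c_{j-1}/c_i\rceil$ (recast as an induction on $j$), and the $(ii)\Rightarrow(i)$ step is the paper's argument verbatim in substance, checking $\mathrm{grd}(mc^{\ast};\mathcal{S}) \le 1 + mc^{\ast} - c_{\max} \le m$ via Theorem \ref{thm_opt} and the greediness of $\{1,c^{\ast},c_{\max}\}$, differing only in inducting on the cardinality of the sub-coin set rather than on the index of its largest element.
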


\begin{proof}
If $\lvert \mathcal{C} \rvert = 1$ or $2$, then $\mathcal{C}$ is completely greedy and $(ii)$, $(iii)$ are vacuously true.
Suppose $\lvert \mathcal{C} \rvert \ge 3$.
We begin with a technical result.
For $a,b \ge 0$, $\lceil ab \rceil \le \lceil a \rceil \lceil b \rceil$.
$(iii) \Longrightarrow (ii)$
Let $\{ 1,c_{i},c_{j} \}$ be a sub-coin set.
Then by assumption, for all $i < k \le j$, $\{1,c_{k-1},c_{k}\}$ is greedy and by Corollary \ref{cor_3pt} $c_{k} \ge \lceil \frac{c_{k}}{c_{k-1}} \rceil(c_{k-1}-1) +1$.
Thus
\begin{equation*}
c_{j} 
\ge \left \lceil \frac{c_{j}}{c_{j-1}} \right \rceil ( c_{j-1} -1) +1 
\ge \left( \prod_{k=i+1}^{j} \left \lceil \frac{c_{k}}{c_{k-1}} \right \rceil \right)( c_{i} -1) + 1
\ge \left \lceil \frac{c_{j}}{c_{i}} \right \rceil ( c_{i} -1) + 1,
\end{equation*}
where the last inequality is the beginning result.
Hence $\{1,c_{i},c_{j}\}$ is greedy by Corollary \ref{cor_3pt}.
$(ii) \Longrightarrow (i)$
Let $\mathcal{C}_{k} = \{c_{1}, \ldots, c_{k} \}$.
$\mathcal{C}_{1}$ is completely greedy.
Suppose $\mathcal{C}_{k-1}$ is.
Let $\mathcal{S}$ be a sub-coin set of $\mathcal{C}_{k}$ containing $c_{k}$, otherwise $\mathcal{S}$ is greedy by the inductive statement.
Thus $\mathcal{S} \backslash \{c_{k}\}$ is greedy.
Let $c^{\ast} = \max \{ c : c \in \mathcal{S} \backslash \{c_{k} \} \}$ and $m = \lceil \frac{ c_{k} }{ c^{\ast} } \rceil$.
Then $\text{grd}( m c^{\ast} \, ; \, \mathcal{S} ) \le \text{grd}(mc^{\ast} \, ; \{1,c_{k} \}) = 1 + mc^{\ast} - c_{k} \le m$ by greediness of $\{1,c^{\ast},c_{k}\}$ and Corollary \ref{cor_3pt}.
Hence $\mathcal{S}$ is greedy by Theorem \ref{thm_opt}.
\end{proof}

\begin{lemma}
\label{lemma_bin_lb}
Let $\mathcal{C} = \{c_{1},\ldots, c_{n} \}$ be a coin set.
If $\mathcal{C}$ is completely greedy and totally unique, then, for all $1\le k \le n$ and $0 \le i \le k - 1$, $c_{k} \ge 2^{i}c_{k-i}$.
\end{lemma}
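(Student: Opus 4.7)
The plan is to reduce the full claim to the case $i = 1$, namely $c_k \ge 2 c_{k-1}$ for all $2 \le k \le n$, and then iterate. The case $i = 0$ is trivial, and once the case $i = 1$ is known, the general bound follows by telescoping: $c_k \ge 2 c_{k-1} \ge 2^{2} c_{k-2} \ge \cdots \ge 2^{i} c_{k-i}$. So all the work is concentrated in establishing that consecutive coins at least double.

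To prove $c_k \ge 2 c_{k-1}$, I would invoke complete greediness on the sub-coin set $\{1, c_{k-1}, c_k\}$, which is a genuine three-element coin set for $k \ge 3$; the case $k = 2$ reduces to $c_2 \ge 2$, which holds by distinctness of coins. Corollary \ref{cor_3pt} then gives
\begin{equation*}
c_k \ge \left\lceil \frac{c_k}{c_{k-1}} \right\rceil (c_{k-1} - 1) + 1.
\end{equation*}
If $c_k < 2 c_{k-1}$, the ceiling equals $2$, and this inequality forces $c_k = 2 c_{k-1} - 1$ exactly.

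The main obstacle is ruling out this off-by-one possibility: complete greediness alone is not enough, since the Magazine--Kozen bound is tight at $2c_{k-1} - 1$, and this is precisely where total uniqueness must enter. I would examine the transaction $n = 2 c_{k-1}$ inside $\mathcal{C}_k = \{c_1, \ldots, c_k\}$. Under the assumption $c_k = 2c_{k-1} - 1$ it admits the two size-$2$ representations $c_{k-1} + c_{k-1}$ and $c_k + 1$, which are distinct because $1 < c_{k-1} < c_k$ for $k \ge 3$. No size-$1$ representation exists, since $n = c_k + 1 > c_k$ lies outside $\mathcal{C}_k$. Hence both representations are minimal, contradicting the uniqueness of $\mathcal{C}_k$ that is guaranteed by total uniqueness of $\mathcal{C}$. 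Therefore $c_k \ge 2 c_{k-1}$, and the telescoping of the first paragraph finishes the proof.
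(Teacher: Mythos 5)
Your proof is correct and follows essentially the same route as the paper: apply Corollary \ref{cor_3pt} to the sub-coin set $\{1,c_{k-1},c_k\}$ to get $c_k \ge 2c_{k-1}-1$, rule out the boundary case by observing that uniqueness fails at $2c_{k-1}$, and then iterate the doubling. You merely spell out the non-uniqueness at $2c_{k-1}$ in more detail than the paper does.
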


\begin{proof}
Obvious for $c_{1} =1$ and $c_{2} \ge 2$.
For all $k \ge 3$, by complete greediness, $\{1,c_{k-1},c_{k}\}$ is greedy.
Thus $c_{k} \ge 2c_{k-1} - 1$ by Corollary \ref{cor_3pt}, but $\{1, \ldots, c_{k-1}, 2 c_{k-1} -1\}$ is not unique at $2 c_{k-1}$.
Hence $c_{k} \ge 2 c_{k-1}$.
The rest follows by induction.
\end{proof}

For a coin set $\mathcal{C} = \{c_{1}, \ldots, c_{m} \}$, let
\begin{equation*}
n(N) := \max \{ n : c_{n} \le N \}.
\end{equation*}
If $\mathcal{C}$ is completely greedy and totally unique then $c_{n} \ge 2^{n-1}$ and $n(N) \le \lceil \log_{2}(N+1) \rceil$.
If the uniqueness is dropped $c_{n} \ge 2^{n-2} + 1$, for $n \ge 2$, and $n(N) \le \lfloor \log_{2}(N-1) \rfloor + 2$, for $N \ge 2$. 

\begin{proposition}
\label{prop_div_unique}
Let $\mathcal{C} = \{c_{1},\ldots, c_{n}\}$ be a coin set.
If $c_{k} \vert c_{k+1}$, for all $1 \le k \le n-1$, then $\mathcal{C}$ is completely greedy and completely unique.
\end{proposition}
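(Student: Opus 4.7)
The plan hinges on the observation that the hypothesis $c_k \mid c_{k+1}$ is inherited by every sub-coin set: if $\mathcal{S} = \{c_{i_1}, \ldots, c_{i_s}\} \subseteq \mathcal{C}$ with $i_1 < \cdots < i_s$, the telescoping chain $c_{i_\ell} \mid c_{i_\ell+1} \mid \cdots \mid c_{i_{\ell+1}}$ yields $c_{i_\ell} \mid c_{i_{\ell+1}}$. So it suffices to prove that any coin set satisfying chain divisibility is itself both greedy and unique; complete greediness and complete uniqueness then follow.

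For the greediness half, I would apply Proposition \ref{prop_3pt_gc}(ii). Any cardinality-$3$ sub-coin set has the form $\{1, c_i, c_j\}$ with $c_i \mid c_j$, and Corollary \ref{cor_3pt}'s condition $c_j \ge \lceil c_j/c_i \rceil (c_i - 1) + 1$ collapses under $c_i \mid c_j$ to the tautology $c_j/c_i \ge 1$. Hence $\mathcal{C}$ is completely greedy.

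For the uniqueness half, I would argue directly on a chain-divisible coin set $\{c_1, \ldots, c_n\}$. The crux is the bound $\beta_k < c_{k+1}/c_k$ for every $k < n$ in any minimal representation $N = \sum_k \beta_k c_k$: violating this allows one to trade $c_{k+1}/c_k \ge 2$ copies of $c_k$ for a single $c_{k+1}$, strictly reducing the coin count and contradicting minimality. Given this bound, the coefficients are forced recursively by mixed-radix extraction: reducing modulo $c_2$ pins $\beta_1 = N \bmod c_2$ (all higher terms being divisible by $c_2$), and the quotient $(N - \beta_1)/c_2$ is represented minimally in the smaller chain-divisible coin set $\{1, c_3/c_2, \ldots, c_n/c_2\}$, so induction on $n$ pins down the remaining $\beta_k$.

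The only point requiring care is verifying that the swap of $c_{k+1}/c_k$ copies of $c_k$ for a single $c_{k+1}$ is a genuine strict reduction, which follows from $c_{k+1}/c_k \ge 2$ (by strict monotonicity together with divisibility); beyond this, every step is a direct application of the earlier results, so I expect no serious obstacle.
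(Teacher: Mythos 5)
Your proposal is correct. The greediness half is the same as the paper's: both invoke Proposition \ref{prop_3pt_gc} and Corollary \ref{cor_3pt}, with the divisibility $c_i \mid c_j$ collapsing the ceiling condition to a triviality. For uniqueness, both arguments rest on the same key fact — minimality together with $c_k \mid c_{k+1}$ forces the digit bound $\beta_k c_k < c_{k+1}$ via the swap of $c_{k+1}/c_k$ copies of $c_k$ for one $c_{k+1}$ — but you organize the conclusion differently. The paper takes a least violator $N$, argues that some minimal representation must avoid the greatest coin $c_a \le N$ (else $N - c_a$ is a lesser violator), and then telescopes the digit bounds to get $\sum_{k=1}^b \alpha_k c_k < c_{b+1} \le c_a \le N$, a contradiction. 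You instead run a direct induction on $n$: extract $\beta_1 = N \bmod c_2$, check that the quotient representation remains minimal in the quotient coin set $\{c_k/c_2\}_{k\ge 2}$ (which it does: a cheaper representation of the quotient would lift to a cheaper representation of $N$), and recurse. Your mixed-radix route is slightly more constructive — it exhibits the unique minimal representation rather than refuting a second one — at the cost of the extra verification that minimality passes to the quotient; the paper's route avoids that but needs the small side argument about why the top coin can be assumed absent. Both are sound and of comparable length.
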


\begin{proof}
By Proposition \ref{prop_3pt_gc} and Corollary \ref{cor_3pt}, $\mathcal{C}$ is completely greedy.
It suffices to show that $\mathcal{C}$ is unique.
Suppose not.
Let $N$ be the least violator and $c_{a}$ its greatest lower bounding coin.
Then $N= \sum_{k=1}^{b} \alpha_{k} c_{k}$ for some $b< a$, otherwise $N - c_{a}$ would be a lesser violator.
As the representation is minimal and $c_{k} \lvert c_{k+1}$, for all $1 \le k \le b$, $\alpha_{k} c_{k} < c_{k+1}$ implies $(\alpha_{k}+1) c_{k} \le c_{k+1}$ and $\sum_{i=1}^{k} \alpha_{i} c_{i} < c_{k+1}$.
Hence $\sum_{k=1}^{b} \alpha_{k} c_{k} < c_{b+1} \le c_{a} \le N$.
\end{proof}

The binary coin set is the only completely greedy totally unique coin set using one coin of each denomination to make change.
Let $\mathcal{C} = \{c_{1}, \ldots, c_{n} \}$ be a coin set.
Then, for $N \ge c_{n}$,
\begin{equation}
\label{eq_greedy_coin_purse}
f_{N}(\mathcal{C}) \ge \left \lfloor \frac{N}{c_{n}} \right \rfloor + \sum_{k=1}^{n-1} \left \lfloor \frac{ c_{k+1} - 1 }{ c_{k} } \right \rfloor.
\end{equation}
Make change at $\lfloor \frac{N}{c_{n}} \rfloor c_{n}$ or $\lfloor \frac{ c_{k+1} - 1 }{ c_{k} } \rfloor c_{k} $.
If $\mathcal{C}$ is greedy, then this is tight.
Thus for a greedy coin set $\lfloor \frac{ N}{ c_{n}} \rfloor$ copies of $c_{n}$ are needed and $\lfloor \frac{ c_{k+1} -1}{c_{k} } \rfloor$ of $c_{k}$.
If $\mathcal{C}$ is completely greedy and totally unique $c_{k+1} \ge 2 c_{k}$ by Lemma \ref{lemma_bin_lb} and for any $k$ strictly exceeding this bound $c_{k}$ must be used more than once.

\section{Asymptotics}

Let
\begin{equation*}
\text{cost}(N \, ; \, \mathcal{C} )
:= \sum_{k=1}^{N} \text{opt}(k \, ; \, \mathcal{C})
= \sum_{k=1}^{N} \text{grd}(k \, ; \, \mathcal{C}),
\end{equation*}
where the last equality holds if $\mathcal{C}$ is greedy, and $\text{cost}(0 \, ; \, \mathcal{C}) := 0$.
A coin set is said to be infinite if $n(N) \rightarrow \infty$.
An infinite coin set is said to be (completely) greedy/unique if the first $n$ terms are (completely) greedy/unique for any finite $n$.
Let $\mathcal{B} := \{1,2, \ldots, 2^{n}, \ldots \}$ denote the infinite binary coin set.
Then \cite{Delange1975}
\begin{equation*}
\text{cost}( N \, ; \, \mathcal{B})
= \frac{1}{2} N \log_{2} N + N F\left( \log_{2} N \right),
\end{equation*}
where $F$ is a continuous nonpositive nowhere differentiable function of period $1$.
Thus
\begin{equation*}
\lim_{N \rightarrow \infty} \frac{ \text{cost}( N \, ; \, \mathcal{B}) }{ n(N) N } 
= \frac{1}{2},
\end{equation*}
where $n(N) = \lceil \log_{2}(N+1) \rceil$.

\begin{theorem}
\label{theorem_icg_lb}
Let $\mathcal{C}$ be an infinite completely greedy coin set.
Then
\begin{equation*}
\limsup_{N \rightarrow \infty} \frac{ \mathrm{cost}(N \, ; \, \mathcal{C}) }{ \left( \lfloor \log_{2}(N-1) \rfloor + 2 \right) N } \ge \frac{1}{2}.
\end{equation*}
\end{theorem}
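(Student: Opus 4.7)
The plan is to show the $\limsup$ is realized (or exceeded) along the subsequence $N_n := c_{n+1} - 1$ as $n \to \infty$. Writing $T_n := \text{cost}(c_{n+1}-1;\mathcal{C})$, $\alpha_n := T_n/c_{n+1}$, $M_k := \lfloor(c_{k+1}-1)/c_k\rfloor$ (so a greedy expansion has $a_j \in [0, M_j]$ copies of $c_j$), $F_n := \sum_{k=1}^n M_k$, and $\mathcal{C}_n := \{c_1,\ldots,c_n\}$, the target estimate is $\alpha_n \ge (1/2)\log_2 c_{n+1} - O(1)$. Granted this, $\lfloor\log_2(c_{n+1} - 2)\rfloor + 2 = \log_2 c_{n+1} + O(1)$ forces the ratio at $N_n$ to tend to at least $1/2$, proving $\limsup \ge 1/2$.

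First I derive a recurrence. For $k \in [c_{n+1}, c_{n+2})$ the greedy expansion is $k = ac_{n+1} + r$ with $a = \lfloor k/c_{n+1}\rfloor \in \{1,\ldots,M_{n+1}\}$, $r \in [0, c_{n+1})$, so $\text{grd}(k;\mathcal{C}) = a + \text{grd}(r;\mathcal{C}_n)$. Setting $R := c_{n+2} - M_{n+1}c_{n+1} \in [1, c_{n+1}]$ for the partial last row, summation gives
\begin{equation*}
T_{n+1} = M_{n+1} T_n + M_{n+1}(c_{n+2} + R - c_{n+1})/2 + \text{cost}(R-1;\mathcal{C}_n).
\end{equation*}
Applying Corollary~\ref{cor_3pt} to $\{1,c_{n+1},c_{n+2}\}$ forces $R \ge c_{n+1} - M_{n+1}$. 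Since equation~(1) gives $F_n = f_{c_{n+1}-1}(\mathcal{C}_n)$ and hence $F_n$ dominates every greedy cost on $[1, c_{n+1})$, the tail bound $\text{cost}(R-1;\mathcal{C}_n) \ge T_n - (c_{n+1}-R)F_n$ holds. Combining with $M_{n+1}c_{n+1} + R = c_{n+2}$ and $(M_{n+1}+1)c_{n+1} \ge c_{n+2}$ and dividing by $c_{n+2}$ produces
\begin{equation*}
\alpha_{n+1} \ge \alpha_n + M_{n+1}/2 - \epsilon_n,
\end{equation*}
where $\epsilon_n$ consists of two nonnegative pieces, one of order $M_{n+1}^2/c_{n+2}$ (from the defect $c_{n+1}-R$) and one of order $M_{n+1}F_n/c_{n+2}$ (from the tail).

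Telescoping yields $\alpha_n \ge F_n/2 - \sum_{k<n}\epsilon_k$. The inequality $M_{k+1}c_{k+1} \le c_{k+2}$ turns every error term into an expression in $1/c_{k+1}$ and $F_k/c_{k+1}$, and the geometric bound $c_k \ge 2^{k-2}+1$ (from complete greediness, noted after Lemma~\ref{lemma_bin_lb}) makes $\sum 1/c_k < \infty$. A double-sum swap $\sum_n F_n/c_{n+1} = \sum_k M_k \sum_{n\ge k} 1/c_{n+1}$ reduces that piece to $\sum_k M_k/c_{k+1} \le \sum_k 1/c_k$, uniformly bounded. Combined with the arithmetic inequality $F_n \ge \log_2 c_{n+1}$---which follows from $c_{n+1} \le \prod_{k=1}^n(M_k+1)$ (by injectivity of the greedy map into $\prod_j[0, M_j]$) together with $\log_2(M+1) \le M$ for $M \ge 1$---this produces $\alpha_n \ge (1/2)\log_2 c_{n+1} - O(1)$ as desired.

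The main obstacle is the uniform error estimate: one must absorb $\sum\epsilon_k$ into the slack $F_n - \log_2 c_{n+1}$ across the full spectrum of completely greedy coin sets, from the binary coin set (where the slack is exactly zero, forcing $\epsilon_k$ to be absolutely summable) to doubly-exponential sets like $\{2^{2^k}\}$ (where individual $\epsilon_k$ may be $\Theta(1)$ per step but the slack grows enormously). The consequence $R \ge c_{n+1} - M_{n+1}$ of complete greediness is precisely what prevents the first error term $M_{n+1}(c_{n+1}-R)/c_{n+2}$ from ever exceeding the available slack.
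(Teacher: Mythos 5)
Your argument is sound and shares the paper's skeleton --- pass to the subsequence $N=c_{n+1}-1$, decompose $\mathrm{cost}(c_{n+2}-1)$ by rows of the greedy expansion (your recurrence for $T_{n+1}$ is exactly the paper's identity (\ref{eq_greedy_quadratic}) with $q,r$ replaced by $M_{n+1},R$), and compare the resulting coefficient to $\log_2 c_{n+1}$ --- but the error bookkeeping is genuinely different. The paper's Lemma \ref{lemma_sum_lb} runs the induction with a deliberately throttled coefficient $s_n=\sum(b_k-1)$, where $b_k$ switches between $\lceil c_{k+1}/c_k\rceil$ and $\lfloor c_{k+1}/c_k\rfloor$ according to whether $q_k\le s_{k-1}$, tracks an explicit accumulator $t_n$, needs the auxiliary Lemma \ref{lemma_prod_sum_lb} to certify $t_n=O(nc_{n+1})$, and then proves $s_n\ge\log_2 c_{n+1}-1$ separately. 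You keep the full coefficient $F_n=\sum_k M_k$ (note $M_k=a_k-1$ in all cases, so your $F_n$ is the paper's $u_n$) and pay with the two error terms $\epsilon_k$, which you absorb into the slack $F_n-\log_2 c_{n+1}\ge\sum_k\bigl(M_k-\log_2(M_k+1)\bigr)$; this yields a cleaner induction and dispenses with Lemma \ref{lemma_prod_sum_lb} and the $b_n,s_n,t_n$ apparatus entirely. The one step you assert rather than execute --- the absorption --- does go through, and you should write it out: since $c_{n+1}-R\le M_{n+1}$ and $M_{n+1}c_{n+1}\le c_{n+2}$, the first error term is at most $M_{k+1}/(2c_{k+1})$; when $M_{k+1}\ge 6$ this is at most $M_{k+1}/4\le\tfrac12\bigl(M_{k+1}-\log_2(M_{k+1}+1)\bigr)$ and is eaten by the corresponding slack term, while when $M_{k+1}\le 5$ it is at most $5/(2c_{k+1})$, which is summable because $c_{k+1}\ge 2^{k-1}+1$. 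Your double-sum bound on $\sum_k F_k/c_{k+1}$ is likewise fine, using $\sum_{j\ge k}1/c_{j+1}\le 2/(c_{k+1}-1)$ from $c_{j+1}-1\ge 2(c_j-1)$. With these details recorded you get $\alpha_n\ge\tfrac12\log_2 c_{n+1}-O(1)$, and the theorem follows as you indicate.
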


\begin{proof}
Dropping to a subsequence
\begin{equation*}
\limsup_{N \rightarrow \infty} \frac{ \mathrm{cost}(N \, ; \, \mathcal{C}) }{ \left( \lfloor \log_{2}(N-1) \rfloor +2 \right) N }
\ge \limsup_{n \rightarrow \infty} \frac{ \mathrm{cost}( c_{n+1} -1 \, ; \, \mathcal{C}) }{ \left( \lfloor \log_{2}\left( c_{n+1}-2\right) \rfloor + 2 \right) (c_{n+1}-1) }.
\end{equation*}
By Lemma \ref{lemma_sum_lb}, $\mathrm{cost}(c_{n+1} -1 \, ; \, \mathcal{C} ) \ge \frac{1}{2} c_{n+1} \log c_{n+1} + o(n c_{n+1})$ and, for a completely greedy coin set, $n = n(c_{n+1} -1) \le \lfloor \log_{2}(c_{n+1}-2) \rfloor +2$.
\end{proof}

Let $\mathcal{C} = \{c_{1},\ldots, c_{n}\}$ be a greedy coin set, then, for $q \in \mathbb{N}$ and $0 \le r < c_{n}$,
\begin{align*}
\text{cost}(qc_{n} + r \, ; \, \mathcal{C})
&= \frac{q(q-1)}{2}c_{n} + q \cdot \text{cost}(c_{n}-1 \, ; \, \mathcal{C} \backslash \{ c_{n} \}) + q(r+1) \\
&\quad + \text{cost}( r \, ; \, \mathcal{C} \backslash \{c_{n} \} ).
\end{align*}
In particular,
\begin{equation*}
\text{cost}(qc_{n} -1 \, ; \, \mathcal{C})
= \frac{q(q-1)}{2} c_{n} + q \cdot \text{cost}( c_{n} -1 \, ; \, \mathcal{C} \backslash \{ c_{n} \} ),
\end{equation*}
where by abusive of notation $\text{cost}( -1 \, ; \, \mathcal{C}) : = 0$, and thusly
\begin{align}
\label{eq_greedy_quadratic}
\text{cost}(qc_{n} + r -1  \, ; \, \mathcal{C})
&= \frac{q(q-1)}{2}c_{n} + q \cdot \text{cost}(c_{n}-1 \, ; \, \mathcal{C} \backslash \{ c_{n} \}) + qr \\ \nonumber
&\quad + \text{cost}( r -1 \, ; \, \mathcal{C} \backslash \{c_{n} \} ),
\end{align}
for all $0 \le r < c_{n}$.

\begin{lemma}
\label{lemma_greedy_oN}
Let $\mathcal{C} = \{c_{1}, \ldots, c_{n}, \ldots \}$ be an infinite completely greedy coin set.
Then
\begin{equation*}
\mathrm{grd}(N \, ; \, \mathcal{C}) = o(N).
\end{equation*}
\end{lemma}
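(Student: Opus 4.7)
The plan is to show $\mathrm{grd}(N;\mathcal{C})/N \to 0$ directly via a size threshold. Fix $\epsilon > 0$; since $\mathcal{C}$ is infinite, $c_k \to \infty$, so we may pick $K$ with $c_{K} > 2/\epsilon$.

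For any $N \ge c_K$, run the greedy algorithm on $N$ and split the coins it selects into those of value $\ge c_K$ and those of value $< c_K$. The first group has total value at most $N$ and each such coin is at least $c_K$, so there are at most $N/c_K$ of them. The second group is engaged only once the current remainder has dropped below $c_K$: this is immediate from the greedy rule of always taking the largest coin not exceeding the remainder. Since every coin is at least $1$, the greedy completion of this sub-$c_K$ remainder contributes at most $c_K - 1$ additional coins. Combining, $\mathrm{grd}(N;\mathcal{C}) \le N/c_K + c_K$, so
\begin{equation*}
\frac{\mathrm{grd}(N;\mathcal{C})}{N} \le \frac{1}{c_K} + \frac{c_K}{N} < \frac{\epsilon}{2} + \frac{\epsilon}{2} = \epsilon
\end{equation*}
whenever $N > 2c_K/\epsilon$, which gives the claim.

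The main obstacle is conceptual rather than technical: one must resist the temptation to track each term of $\sum_k \lfloor (c_{k+1}-1)/c_k \rfloor$ (which can be $\Theta(N)$ if consecutive ratios are large) and instead bundle all of the large coins together via the crude value-budget bound $N/c_K$. This two-term split is what lets $K$ be sent to infinity before $N$. Worth noting: the argument actually uses only that $\mathcal{C}$ is infinite and that $\mathrm{grd}$ is defined by the greedy rule; complete greediness is not invoked in the proof itself, though it is the setting of subsequent applications.
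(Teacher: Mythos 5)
Your proof is correct, and it takes a genuinely different and more elementary route than the paper's. The paper bounds $\mathrm{grd}(N \, ; \, \mathcal{C})$ by the optimal coin purse size $f_{N}(\mathcal{C}) = \lfloor N/c_{n} \rfloor + \sum_{k=1}^{n-1} \lfloor (c_{k+1}-1)/c_{k} \rfloor$ (equality in (\ref{eq_greedy_coin_purse}) for greedy sets) and then invokes complete greediness --- via Corollary \ref{cor_3pt}, $c_{k+1} \ge 2c_{k}-1$, hence $c_{k} \ge 2^{k-2}$ and $c_{k} \le 2^{-n+k}N+1$ --- to conclude $\mathrm{grd}(N \, ; \, \mathcal{C}) \le n2^{-n+3}N$ with $n = n(N)$. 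Your threshold argument instead splits the greedy selection at a fixed coin $c_{K}$: at most $N/c_{K}$ coins of value at least $c_{K}$ by a value-budget count, and at most $c_{K}-1$ coins once the remainder drops below $c_{K}$; your key observation --- that greedy never selects a coin below $c_{K}$ while the remainder is still at least $c_{K}$, because $c_{K}$ itself is then an available choice --- is exactly right. This establishes the lemma for \emph{any} infinite coin set, with no greediness or complete greediness hypothesis, which is a genuine strengthening; the trade-off is that you obtain only a qualitative $o(N)$ with no explicit rate, whereas the paper's computation yields the quantitative bound $n(N)2^{3-n(N)}N$. Since the later application in Lemma \ref{lemma_sum_lb} uses only the $o(\cdot)$ statement (and $\mathrm{grd}(m \, ; \, \mathcal{C}_{n-1}) = \mathrm{grd}(m \, ; \, \mathcal{C})$ for $m < c_{n}$, so the truncated sets inherit the bound), your version suffices for everything the lemma is needed for.
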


\begin{proof}
For all $k$, $c_{k+1} \ge 2c_{k} -1$ by Corollary \ref{cor_3pt}.
Inductively this implies $c_{k} \ge 2^{k-2}$ and $c_{k} \le 2^{-n+k} N + 1$, for all $1 \le k \le n$, where $n=n(N)$.
By (\ref{eq_greedy_coin_purse}), as equality holds for a greedy coin set,
\begin{align*}
\mathrm{grd}(N \, ; \, \mathcal{C})
&\le f_{N}(\mathcal{C}) \\
&= \left \lfloor \frac{N}{c_{n}} \right \rfloor + \sum_{k=1}^{n-1} \left \lfloor \frac{ c_{k+1} - 1 }{ c_{k} } \right \rfloor \\
&\le \left \lfloor \frac{N}{ 2^{n-2}} \right \rfloor + \sum_{k=1}^{n-1} \left \lfloor \frac{ 2^{-n+k+1} N }{ 2^{k-2} } \right \rfloor \\
&\le n2^{-n+3}N.
\end{align*}
\end{proof}

\begin{lemma}
\label{lemma_sum_lb}
Let $\mathcal{C} = \{c_{1}, \ldots, c_{n}, \ldots \}$ be an infinite completely greedy coin set, $c_{n+1} = q_{n} c_{n} + r_{n}$, where $q_{n}$ and $r_{n}$ are chosen by the division algorithm, $a_{n} = \lceil \frac{c_{n+1}}{c_{n}} \rceil$ and $s_{n} = \sum_{k=1}^{n} (b_{k} - 1)$, where $b_{1} = a_{1} = q_{1}$ and, for $n \ge 2$,
\begin{equation*}
b_{n} = 
\begin{cases}
a_{n} & q_{n} \le s_{n-1} \\
q_{n} & \text{else}
\end{cases}.
\end{equation*}
Then
\begin{equation*}
\mathrm{cost}(c_{n+1} -1 \, ; \, \mathcal{C} ) \ge \frac{s_{n}}{2} c_{n+1} + o(n c_{n+1}).
\end{equation*}
Moreover,
\begin{equation*}
s_{n} \ge \max \{ n, \log_{2} c_{n+1} -1 \}.
\end{equation*}
\end{lemma}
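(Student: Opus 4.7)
The plan is to treat the two claims separately, starting with the easier \emph{moreover} part.

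For the bound on $s_n$, I would first observe that $b_k \ge 2$ for every $k \ge 1$. Indeed, $b_1 = q_1 = c_2 \ge 2$; and for $k \ge 2$, $a_k = \lceil c_{k+1}/c_k \rceil \ge 2$ (since $c_{k+1} > c_k$), while in the alternative branch $b_k = q_k$ we have $q_k > s_{k-1} \ge k - 1 \ge 1$, so $b_k \ge 2$ in every case. This immediately gives $s_n = \sum_{k=1}^n (b_k - 1) \ge n$. For the logarithmic bound, I would prove $c_{n+1} \le 2^{s_n + 1}$ by induction on $n$, using $c_{n+1} \le a_n c_n$ when $b_n = a_n$ and $c_{n+1} < (b_n + 1) c_n$ when $b_n = q_n$, together with the elementary inequality $b + 1 \le 2^{b - 1}$ for $b \ge 3$ (the remaining cases $b_n \le 2$ in the second branch force $n \le 2$ and are checked by hand). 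Translating $c_{n+1} \le 2^{s_n + 1}$ gives $s_n \ge \log_2 c_{n+1} - 1$.

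For the main inequality, I would induct on $n$ using the identity (\ref{eq_greedy_quadratic}), which reads
\begin{equation*}
T_n := \mathrm{cost}(c_{n+1} - 1 \, ; \, \mathcal{C}) = \tfrac{q_n(q_n-1)}{2} c_n + q_n r_n + q_n T_{n-1} + C_n, \quad C_n := \mathrm{cost}(r_n - 1 \, ; \, \mathcal{C}_{n-1}) \ge 0.
\end{equation*}
Substituting the inductive hypothesis $T_{n-1} \ge \tfrac{s_{n-1}}{2} c_n + o((n-1) c_n)$ and using $q_n c_n = c_{n+1} - r_n$ yields, after algebraic simplification,
\begin{equation*}
T_n - \tfrac{s_n}{2} c_{n+1} \ge q_n \bigl( T_{n-1} - \tfrac{s_{n-1}}{2} c_n \bigr) + C_n + \tfrac{(q_n + 1 - s_{n-1}) r_n}{2} + \Delta_n,
\end{equation*}
where $\Delta_n = 0$ in Case 1 and Case 2a (so that $s_n = s_{n-1} + q_n - 1$) and $\Delta_n = -c_{n+1}/2$ in Case 2b (so that $s_n = s_{n-1} + q_n$). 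In Case 1, $q_n > s_{n-1}$ makes $q_n + 1 - s_{n-1} \ge 2 > 0$, and in Case 2a the factor $r_n$ vanishes; in both situations the correction terms are non-negative and the induction transmits with only an $o(n c_{n+1})$ error.

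The main obstacle is Case 2b, where one must absorb $-c_{n+1}/2$ and (when $s_{n-1} > q_n + 1$) the potentially large negative contribution $\tfrac{(q_n + 1 - s_{n-1}) r_n}{2}$ using only the non-negative $C_n$. The $-c_{n+1}/2$ contributes $1/(2n)$ to the normalized deficit, which telescopes into the $o(n c_{n+1})$ slack. The harder term is handled by lower-bounding $C_n$ via the lemma applied recursively to $\mathcal{C}_{n-1}$: if $j$ is the largest index with $c_j \le r_n$, then monotonicity of cost and the inductive estimate give $C_n \ge T_{j-1} \ge \tfrac{s_{j-1}}{2} c_j + o(j c_j)$. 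The definition of $b_n$ (switching to $a_n$ precisely when $q_n \le s_{n-1}$) is calibrated so that this recursive bound, aggregated over all Case 2b steps, covers the accumulated deficit up to $o(n c_{n+1})$. Making this final accumulation argument uniform in $n$ is the most delicate part of the proof.
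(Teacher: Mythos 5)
Your treatment of the \emph{moreover} part is sound and essentially the paper's own argument: $b_k \ge 2$ for every $k$ (giving $s_n \ge n$), and a telescoping comparison of $\log_2 c_{n+1}$ against $\sum_k (b_k-1)$ with one exceptional step ($b_n = q_n = 2 > s_{n-1}$, forcing $n=2$) checked by hand. Your algebraic reduction of the main inequality via (\ref{eq_greedy_quadratic}) is also correct and reproduces the paper's display (\ref{eq_cnp1_lb}), including the three cases and the values of $\Delta_n$.

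The gap is in Case 2b, and it is genuine. First, the per-step deficit $-c_{n+1}/2$ cannot be ``telescoped into the $o(nc_{n+1})$ slack'': in the induction the deficit inherited from step $n-1$ is multiplied by $q_n$, and $\prod_{j>k} q_j \cdot c_{k+1}$ can be of order $c_{n+1}$ for every $k$, so the accumulated deficit can reach order $n c_{n+1}$ --- the same order as the main term. Second, and more fundamentally, the term $\tfrac{(q_n+1-s_{n-1})r_n}{2}$ is itself of order $-n c_{n+1}$ in natural examples: take $c_k = 2^{k-2}+1$ (completely greedy, with $q_n=1$, $r_n = c_n-1$, $s_{n-1}=n-1$), where it equals roughly $-\tfrac{n}{4}c_{n+1}$. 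It must therefore be cancelled essentially exactly by $C_n$, and your proposed bound $C_n \ge T_{j-1} \ge \tfrac{s_{j-1}}{2}c_j$ with $c_j \le r_n < c_{j+1}$ is too lossy to do this: in the example $j = n-1$ and $c_{n-1} \approx r_n/2$, so you recover only about half of the required $\tfrac{s_{n-1}}{2}c_n$, leaving a residual of order $n c_{n+1}$ from a single step. The missing ingredients are: (i) by Corollary \ref{cor_3pt}, in this case $r_n \ge c_n - q_n$ with $q_n \le s_{n-1}$, which is eventually at most $c_n/n$ by Lemma \ref{lemma_prod_sum_lb}, so $r_n$ is within $q_n = o(c_n)$ of $c_n$; (ii) hence $\mathrm{cost}(r_n-1\,;\,\mathcal{C}_{n-1}) \ge \mathrm{cost}(c_n-1\,;\,\mathcal{C}_{n-1}) - q_n\cdot o(c_n)$ by Lemma \ref{lemma_greedy_oN}, which recovers the full $\tfrac{s_{n-1}}{2}c_n$ and makes the coefficient of $r_n$ vanish identically rather than approximately; (iii) the $o(\cdot)$ errors must be tracked through an explicit recursion (the paper's $t_n$, with $t_n = (q_n+1)t_{n-1}+q_nc_n$ in this case) precisely because they are multiplied by $q_n$ at each step. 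Without (i)--(iii) the ``calibration/aggregation'' you defer to at the end does not close.
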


\begin{proof}
It suffices to show that
\begin{equation*}
\mathrm{cost}(c_{n+1} -1 \, ; \, \mathcal{C} ) \ge \frac{s_{n}}{2} c_{n+1} - t_{n}o(1),
\end{equation*}
where $t_{n}$ is at most $O(nc_{n+1})$.
Let $t_{1} = 0$ and, for $n \ge 2$,
\begin{equation*}
t_{n} = 
\begin{cases}
(q_{n} + 1) t_{n-1} + q_{n} c_{n} & r_{n} > 0 \text{ and } q_{n} \le s_{n-1} \\
q_{n} t_{n-1} & \text{else}
\end{cases}.
\end{equation*}
For the order condition eventually $s_{n-1} \le \frac{c_{n}}{n}$, for the details see Lemma \ref{lemma_prod_sum_lb}.
Let $n_{0}$ be such an $n$, then $t_{n_{0}} \le M n_{0} c_{n_{0} + 1}$ for some $M \ge 1$.
Let $n > n_{0}$ and suppose $t_{n-1} \le M(n-1)c_{n}$.
If $r_{n} > 0$, then by Corollary \ref{cor_3pt}, as $\{1,c_{n},c_{n+1}\}$ is greedy,
\begin{equation*}
q_{n} c_{n} + r_{n} = c_{n+1} \ge (q_{n} + 1) (c_{n} -1) + 1
\qquad \Longrightarrow \qquad
r_{n} \ge c_{n} - q_{n}.
\end{equation*}
Thus
\begin{equation*}
q_{n} \le s_{n-1} \le \frac{c_{n}}{n}
\qquad \Longrightarrow \qquad
(n-1)c_{n} 
\le n(c_{n} - q_{n})
\le n r_{n}
\end{equation*}
and
\begin{align*}
t_{n} 
&= (q_{n}+1) t_{n-1} + q_{n} c_{n} \\
&\le (q_{n}+1) M (n-1) c_{n} + q_{n} c_{n} \\
&\le M n (q_{n} c_{n} + r_{n}) \\
&= M n c_{n+1}.
\end{align*}
For $c_{2}$, $r_{1}= 0$ thus
\begin{equation*}
\text{cost}(c_{2}-1 \, ; \, \mathcal{C}) 
= \frac{q_{1}(q_{1}-1)}{2} 
= \frac{ s_{1} }{2} c_{2}.
\end{equation*}
Suppose for $n$.
Let $\mathcal{C}_{n} = \{c_{1}, \ldots, c_{n} \}$.
Then by (\ref{eq_greedy_quadratic})
\begin{align}
\label{eq_cnp1_lb}
\nonumber
&\text{cost}( c_{n+1} -1  \, ; \, \mathcal{C}) \\ \nonumber
&\quad = \text{cost}(q_{n} c_{n} + r_{n} -1  \, ; \, \mathcal{C}_{n}) \\ \nonumber
&\quad = \frac{q_{n}(q_{n}-1)}{2}c_{n} + q_{n} \cdot \text{cost}(c_{n}-1 \, ; \, \mathcal{C}_{n-1}) + q_{n}r_{n} + \text{cost}( r_{n} -1 \, ; \, \mathcal{C}_{n-1} ) \\ \nonumber
&\quad \ge \frac{q_{n}(q_{n}-1)}{2}c_{n} + q_{n} \frac{s_{n-1}}{2} c_{n} - q_{n} t_{n-1} o(1) + q_{n}r_{n} + \text{cost}( r_{n} -1 \, ; \, \mathcal{C}_{n-1} ) \\ \nonumber
&\quad = \frac{ q_{n} -1 }{2} c_{n+1} - \frac{ q_{n} -1 }{2} r_{n} + \frac{s_{n-1}}{2} c_{n+1} - \frac{s_{n-1}}{2}r_{n} - q_{n} t_{n-1} o(1) + q_{n} r_{n} \\ \nonumber
&\qquad + \text{cost}(r_{n}-1 \, ; \, \mathcal{C}_{n-1} )  \\
&\quad = \frac{ s_{n} }{2} c_{n+1} - \frac{b_{n} - q_{n} }{2} c_{n+1} + \frac{ q_{n} + 1 - s_{n-1} }{2} r_{n} + \text{cost}(r_{n} - 1 \, ; \, \mathcal{C}_{n-1} ) \\ \nonumber
&\qquad - q_{n} t_{n-1}o(1).
\end{align}
The desired bound holds for $r_{n} = 0$.
Suppose $r_{n} > 0$.
If $q_{n} \le s_{n-1}$, then $b_{n} - q_{n}= 1$ and
\begin{align*} 
&\text{cost}( c_{n+1} -1  \, ; \, \mathcal{C}) \\
&\quad \ge \frac{ s_{n} }{2} c_{n+1} - \frac{1}{2}(q_{n} c_{n} + r_{n} ) + \frac{ q_{n} + 1 - s_{n-1} }{2} r_{n} + \text{cost}(r_{n} \! - \! 1 \, ; \, \mathcal{C}_{n-1} ) \\
&\qquad - q_{n}t_{n-1} o(1) \\
&\quad = \frac{ s_{n} }{2} c_{n+1} - \frac{1}{2}q_{n} c_{n} + \frac{ q_{n} - s_{n-1} }{2} r_{n} + \text{cost}(r_{n} \! - \! 1 \, ; \, \mathcal{C}_{n-1} ) - q_{n}t_{n-1}o(1).
\end{align*}
Moreover, by Corollary \ref{cor_3pt} $r_{n} \ge c_{n} - \min \{ q_{n}, c_{n} -1 \}$.
Thus by Lemma \ref{lemma_greedy_oN}
\begin{align*}
\text{cost}( r_{n} - 1 \, ; \, \mathcal{C}_{n-1})
&\ge \text{cost}( c_{n} - \min \{ q_{n}, c_{n} -1\} -1 \, ; \, \mathcal{C}_{n-1}) \\
&= \text{cost}( c_{n} -1 \, ; \, \mathcal{C}_{n-1}) - \sum_{i=0}^{\min \{ q_{n}, c_{n} -1 \} -1} \text{grd}(c_{n} - i - 1 \, ; \mathcal{C}_{n-1} )\\
&= \text{cost}( c_{n} -1 \, ; \, \mathcal{C}_{n-1}) - \sum_{i=0}^{\min \{ q_{n}, c_{n} -1 \} -1} o(c_{n} - i - 1) \\
&\ge \frac{s_{n-1}}{2}c_{n} - t_{n-1} o(1) - q_{n} o(c_{n})\\
&=\frac{s_{n-1}}{2}c_{n} - t_{n-1}o(1) - q_{n} c_{n} o(1) \\
&= \frac{1}{2}q_{n}c_{n} +\frac{s_{n-1} - q_{n} }{2} c_{n} - (t_{n-1} + q_{n}c_{n})o(1) \\
&\ge \frac{1}{2}q_{n}c_{n} +\frac{s_{n-1} - q_{n}}{2} r_{n} - (t_{n-1} + q_{n} c_{n})o(1).
\end{align*}
Combining gives the desired bound, where $q_{n} t_{n-1} + t_{n-1} + q_{n} c_{n} = t_{n}$.
If $q_{n} > s_{n-1}$, then $b_{n} = q_{n}$ and all terms in (\ref{eq_cnp1_lb}) are nonnegative.
For the lower bounds on $s_{n}$.
If $q_{n} =1$, then $q_{n} \le c_{2} -1 = q_{1} - 1 = s_{1} \le s_{n-1}$.
Thus $b_{n} = a_{n} \ge 2$ and $s_{n} \ge n$.
Similarly, $s_{1} = q_{1} - 1 \ge \log_{2} q_{1} = \log_{2} c_{2}$, where $\log_{2} x \le x -1 $ for $x \ge 2$.
If $r_{n} = 0$
\begin{equation*}
\log_{2} c_{n+1}
=\log_{2} q_{n} + \log_{2} c_{n}
\le q_{n} -1 + \log_{2} c_{n}.
\end{equation*} 
If $q_{n} \ge 3$
\begin{equation*}
\log_{2} c_{n+1}
\le \log_{2}(q_{n} +1) + \log_{2} c_{n}
\le q_{n} -1 + \log_{2} c_{n},
\end{equation*}
where $\log_{2}x \le x -2$ for $x \ge 4$.
Suppose $q_{n} \le 2$ and $r_{n} > 0$.
By Corollary \ref{cor_3pt} $c_{n+1} = 2c_{n}-1,3c_{n}-1,3c_{n}-2$.
For all $n$, 
\begin{equation*}
\log_{2} c_{n+1}
\le \log_{2} a_{n} + \log_{2} c_{n}
\le a_{n} -1 + \log_{2} c_{n}.
\end{equation*}
As $s_{n} \ge n$, the only case for which $b_{n} \ne a_{n}$ is $c_{2} -1 = s_{1} < q_{2} = 2$ or $c_{2} = 2$ and $c_{3} = 5$.
Hence $s_{n} < \log c_{n+1}$ at most once and by at most $1$.
\end{proof}

\begin{lemma}
\label{lemma_prod_sum_lb}
Let $\mathcal{C} = \{c_{1}, \ldots, c_{n}, \ldots \}$ be an infinite completely greedy coin set, $a_{n} = \lceil \frac{c_{n+1}}{c_{n}} \rceil$ and $u_{n} = \sum_{k=1}^{n} (a_{k} - 1)$.
There exists $n_{0}$ such that for all $n \ge n_{0}$
\begin{equation*}
c_{n} \ge n u_{n-1}.
\end{equation*}
\end{lemma}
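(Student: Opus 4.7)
The plan is to show that $c_n$ grows at least as fast as the product $\prod_{k=1}^{n-1} a_k$ up to constant factors, and then to bootstrap from the product to the sum using the fact that every $a_k \ge 2$, which gives a loss of only a factor of roughly $(n-1)/2^{n-2}$ between $\prod a_k$ and $\sum a_k$.

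First, I would use Corollary \ref{cor_3pt} applied to the sub-coin set $\{1, c_k, c_{k+1}\}$, which is greedy by complete greediness of $\mathcal{C}$, to get $c_{k+1} \ge a_k(c_k - 1) + 1$, i.e., $c_{k+1} - 1 \ge a_k(c_k - 1)$, for every $k \ge 2$. Iterating this inequality from $k = 2$ up to $k = n-1$ yields
\begin{equation*}
c_n - 1 \ge (c_2 - 1) \prod_{k=2}^{n-1} a_k = (a_1 - 1) \prod_{k=2}^{n-1} a_k,
\end{equation*}
since $a_1 = \lceil c_2/c_1 \rceil = c_2$. Because $a_1 \ge 2$, we have $a_1 - 1 \ge a_1/2$, so $c_n \ge \tfrac{1}{2} \prod_{k=1}^{n-1} a_k$.

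Next, I would exploit $a_k \ge 2$ (which holds since the $c_k$ are strictly increasing) to bound the product from below in terms of the sum. For any $m$ indices with $a_k \ge 2$,
\begin{equation*}
\prod_{k=1}^{m} a_k \ge 2^{m-1} \max_{1 \le k \le m} a_k \ge \frac{2^{m-1}}{m} \sum_{k=1}^{m} a_k \ge \frac{2^{m-1}}{m} u_m,
\end{equation*}
where the last inequality uses $a_k - 1 < a_k$. Taking $m = n-1$ and combining with the bound from the previous step gives
\begin{equation*}
c_n \ge \frac{1}{2} \prod_{k=1}^{n-1} a_k \ge \frac{2^{n-3}}{n-1} u_{n-1}.
\end{equation*}

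Finally, for all sufficiently large $n$ we have $\frac{2^{n-3}}{n-1} \ge n$, which yields $c_n \ge n u_{n-1}$ and supplies the desired $n_0$. I do not expect a real obstacle here: the product-versus-sum step is the only ingredient beyond Corollary \ref{cor_3pt}, and it goes through cleanly because $a_k \ge 2$ forces the product to dominate the sum exponentially. The only care needed is in the boundary handling of $a_1$ versus the starting index of the iteration.
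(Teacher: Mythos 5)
Your proof is correct, and it shares the paper's key first step: iterating Corollary \ref{cor_3pt} on the triples $\{1,c_{k},c_{k+1}\}$ to get $c_{n}-1 \ge (a_{1}-1)\prod_{k=2}^{n-1} a_{k}$ with $a_{1}=c_{2}$. Where you diverge is in how the product is compared to the sum. The paper defines $g_{n}(\underline{a}) = \left(\prod_{k=2}^{n-1} a_{k}\right)(a_{1}-1)+1 - n\sum_{k=1}^{n-1}(a_{k}-1)$, checks that each partial derivative is at least $2^{n-3}-n \ge 0$ for large $n$, and thereby reduces to the extremal point $a_{i}=2$ for all $i$, where the claim becomes $2^{n-2}+1 \ge n(n-1)$. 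You instead use the direct elementary chain $\prod_{k=1}^{m} a_{k} \ge 2^{m-1}\max_{k} a_{k} \ge \frac{2^{m-1}}{m}\sum_{k} a_{k} \ge \frac{2^{m-1}}{m} u_{m}$, arriving at essentially the same threshold $2^{n-3} \ge n(n-1)$. Your route avoids the multivariable monotonicity argument entirely and is arguably cleaner; the paper's route makes explicit that the all-twos configuration is the worst case, which is mildly more informative about where the bound is tight. Both are sound; the only care needed in yours, as you note, is that the corollary applies to $\{1,c_{k},c_{k+1}\}$ only for $k \ge 2$, which you handle correctly by starting the iteration at $k=2$ and folding $a_{1}=c_{2}$ in afterward.
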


\begin{proof}
Let $n \ge 3$ and
\begin{equation*}
g_{n}( \underline{a})
=\left( \prod_{k=2}^{n-1} a_{k} \right) (a_{1} -1) + 1 - n\sum_{k=1}^{n-1} (a_{k} - 1).
\end{equation*}
By Corollary \ref{cor_3pt} $c_{n} - n u_{n-1} \ge g_{n}( \underline{a})$, where $a_{1} = c_{2}$.
Then
\begin{equation*}
\frac{ \partial }{ \partial a_{1}}g_{n}(\underline{a})
= \left( \prod_{k=2}^{n-1} a_{k} \right) - n,
\end{equation*}
and, for $i \ne 1$,
\begin{equation*}
\frac{ \partial }{ \partial a_{i}}g_{n}(\underline{a})
= \left( \prod_{k=2,k\ne i}^{n-1} a_{k} \right)(a_{1}-1) - n.
\end{equation*}
Thus, for all $i$,
\begin{equation*}
\frac{ \partial }{ \partial a_{i}} g_{n}(\underline{a})
\ge 2^{n-3} - n.
\end{equation*}
Let $n_{1}$ be such that $2^{n-3} \ge n$ for all $n \ge n_{1}$.
Then, for all $n \ge n_{1}$, $g_{n}$ is nondecreasing in each coordinate and it suffices to show that the bound holds for $a_{i} = 2$.
Choose $n_{2}$ such that $2^{n-2} + 1 \ge n(n-1)$ for all $n \ge n_{2}$.
Let $n_{0} = \max\{3,n_{1},n_{2} \}$.
For all $n \ge n_{0}$, $c_{n} \ge n u_{n-1}$.
\end{proof}


\begin{thebibliography}{1}

\bibitem{Cowen2008}
L.~Cowen, R.~Cowen, and A.~Steinberg, ``Totally greedy coin sets and greedy
  obstructions,'' {\em Electr. J. Comb.}, vol.~15, p.~R90, 2008.

\bibitem{Shallit2003}
J.~Shallit, ``What this country needs is an 18-cent piece,'' {\em Mathematical
  Intelligencer}, vol.~25, pp.~20--23, 2003.

\bibitem{Bryant2012}
L.~Bryant, J.~Hamblin, and L.~Jones, ``A variation on the money-changing
  problem,'' {\em The American Mathematical Monthly}, vol.~119, no.~5,
  pp.~406--414, 2012.

\bibitem{Kozen1994}
D.~Kozen and S.~Zaks, ``Optimal bounds for the change-making problem,'' {\em
  Theoretical Computer Science}, vol.~123, no.~2, pp.~377--388, 1994.

\bibitem{Magazine1975}
M.~J. Magazine, G.~L. Nemhauser, and L.~E. Trotter, ``When the greedy solution
  solves a class of knapsack problems,'' {\em Operations Research}, vol.~23,
  no.~2, pp.~207--217, 1975.

\bibitem{Delange1975}
H.~Delange, ``Sur la fonction sommatoire de la fonction "somme des chiffres",''
  {\em Enseign Math}, vol.~21, pp.~31--47, 1975.

\end{thebibliography}
\end{document}